\documentclass{article}

\usepackage[margin=1in]{geometry}

\usepackage{amssymb}
\usepackage{amsmath}
\usepackage{amsthm}
\usepackage{enumitem}
\usepackage{bm}
\usepackage{tikz}
\usetikzlibrary{arrows.meta}
\usetikzlibrary{decorations} 
\usetikzlibrary{shapes}
\usepackage[margin=2cm]{caption}
\usepackage{subcaption}
\usepackage{titlesec}
\usepackage[backref]{hyperref}
\usepackage{graphicx}
\usepackage{xcolor}
\usepackage{thmtools}
\usepackage{thm-restate}
\usepackage{cleveref}
\usepackage{lmodern}
\usetikzlibrary{arrows.meta,backgrounds,calc}

\newtheorem{theorem}{Theorem}[section]
\newtheorem{conjecture}[theorem]{Conjecture}
\newtheorem{lemma}[theorem]{Lemma}{}

\theoremstyle{definition}

\newcommand{\bad}{\mathrm{b}}

\definecolor{rootcolor}{HTML}{ff8c66}
\definecolor{onecolor}{HTML}{055f96}

\title{Erd\H{o}s-S\'os via random cyclic orderings}
\author{Bryce Frederickson\thanks{Department of Mathematics, Emory University, 
Atlanta, GA, USA. Email: {\tt bfrede4@emory.edu}}}
\date{}

\begin{document}

\maketitle

\begin{abstract}
        We present a simplified proof of the Erd\H{o}s-S\'os Conjecture on trees in graphs, which is based on its recent resolution by GPT-6 Astra. At the same time, we also give another proof of the corresponding conjecture for antidirected trees in digraphs, posed by Addario-Berry, Havet, Linhares Sales, Reed, and Thomass\'e. Our proof uses the language of random cyclic orderings in order to take advantage of symmetries that were obscured in the original argument, and we hope that the reader will find it intuitive.
\end{abstract}

\section{Introduction}

One of the most tantalizing open problems in extremal combinatorics has long been the Erd\H{o}s-S\'os Conjecture (see~\cite{Erdos1964Extremal}) on the maximum possible number of edges in an $n$-vertex graph excluding a given tree as a subgraph.

\begin{conjecture}[Erd\H{o}s-S\'os Conjecture]\label{conj:main}
    For every integer $k \geq 1$, every graph of average degree more than $k-1$ contains every tree with $k$ edges as a subgraph.
\end{conjecture}

This bound on the average degree is best possible by considering a host graph which is a disjoint union of cliques of size $k$.

Recently, the Erd\H{o}s-S\'os Conejcture was proved in full by GPT-6 Astra~\cite{astra2026counting}, after a mountain of previous partial results on the problem which have invigorated the development of the area.
For an extensive history of the problem through the year $2020$, we refer the reader to the survey paper~\cite{Stein2020Survey} by Stein. Even since~2020 and before~\cite{astra2026counting}, there have been several major breakthroughs on the problem~\cite{DPRSM2026Asymptotic,   Pokrovskiy2024Hyperstability, ReedStein2025NearlySpanning,  ReedStein2026Dense, ReedStein2026Extremal,  SteinTrujilloNegrete2026FourCycles} and its digraph variants~\cite{GaoLo2026Antipaths, GrzesikSkrzypczyk2025Antidirected,    KlimosovaStein2023Antipaths,  KontogeorgiouSantosStein2025Antidirected,  liu2026antidirected, LuChen2025Girth, SantosSteinWilliams2026Butterflies, SteinTrujilloNegrete2025Dense,  SteinZarateGueren2024Antidirected}.

We present a new and simplified variation on the proof of \Cref{conj:main} given in~\cite{astra2026counting}, stated in the language of random cyclic orderings, which we believe to be a more natural perspective. Also, because the proof is easily adapted to the directed setting, we prove the following stronger statement, conjectured by Addario-Berry, Havet, Linhares Sales, Reed, and Thomass\'e~\cite{ABHLRT2013OrientedTrees}. We assume that all digraphs are loopless and have no multiple arcs with the same source and sink, but they may have \emph{digons}, which are directed cycles of length $2$. A digraph with no digons is called an \emph{oriented graph}. An oriented graph is called \emph{antidirected} if every vertex is a source or a sink.

\begin{theorem}\label{thm:main directed}
    For every integer $k \geq 1$, every digraph of average out-degree greater than $k-1$ contains every antidirected oriented tree with $k$ arcs as a subdigraph.
\end{theorem}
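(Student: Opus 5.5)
We argue via random cyclic orderings of the vertex set, embedding the tree so that its bipartition matches the out/in structure. Let $D$ be a digraph on $n$ vertices with more than $(k-1)n$ arcs, and let $T$ be an antidirected tree with $k$ arcs. Since $T$ is antidirected, its vertices split into sources $A$ and sinks $B$, and every arc goes from $A$ to $B$. So an embedding of $T$ is a pair of injections, $A \to V(D)$ and $B \to V(D)$, with disjoint images, such that every arc of $T$ maps to an arc of $D$.

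We first reduce to a minimal counterexample. We may delete arcs and vertices while the average out-degree stays above $k-1$. After this, every nonempty subdigraph has at most $(k-1)|V|$ arcs plus a small excess. Concretely, we aim for a digraph in which every vertex $v$ has $d^+(v)+d^-(v) \ge k$, perhaps in a suitable averaged form. This is the directed analogue of passing to a graph with minimum degree at least $k/2$-type conditions. We then fix a root and an ordering of $T$ in which each vertex after the first has exactly one earlier neighbour.

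The core step is a counting argument over a uniformly random cyclic ordering $\sigma$ of $V(D)$. For each vertex we consider the arcs to its "next" neighbours in the cyclic order: out-neighbours for source-type vertices and in-neighbours for sink-type vertices. We then embed $T$ greedily. Each new vertex of $T$ is sent to the first available neighbour, clockwise from its parent's image, that has not already been used. The idea is to define a weight for each partial embedding and show that the expected number of successful greedy completions, averaged over $\sigma$ and over the starting arc, is positive.

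The argument rests on three steps:
\begin{itemize}
\item By rotational symmetry, the position at which an arc $uv$ appears relative to the cyclic gap from $u$ is uniform.
\item Summing over all arcs $e(D) > (k-1)n$, the expected number of "forward" neighbours available exceeds what is needed to accommodate the at most $k$ vertices already used.
\item A collision, meaning the first available neighbour is already used by the embedding, is charged to a previously embedded vertex lying in the relevant cyclic interval, and each such vertex is charged at most once per tree edge.
\end{itemize}
Together these give an inequality of the form $\mathbb{E}[\#\text{failures}] < \mathbb{E}[\#\text{attempts}]$, and hence some $\sigma$ and some start yield a full copy of $T$. Because $T$ is antidirected, source images only ever need out-arcs and sink images only ever need in-arcs. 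Out-degrees and in-degrees sum to the same total $e(D)$, so the symmetric averaging is exactly what the hypothesis on average out-degree supplies.

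The main obstacle is the charging step. We must show that the cyclic interval swept while placing a vertex contains, in expectation, fewer previously used vertices than available neighbours, uniformly over the shape of $T$. To handle this we would first try to process $T$ in breadth-first order from a leaf-heavy root. We would then compare each sweep with the arc-length of the image of the already-embedded subtree, using linearity of expectation over the $k$ tree edges. The tight example of disjoint copies of $\overleftrightarrow{K_k}$ should make every inequality an equality, which serves as a guide for which bounds must be sharp.
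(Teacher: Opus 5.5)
There is a genuine gap. The charging step you flag as the main obstacle is the whole content of the theorem, and nothing in your setup supplies it.

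In your greedy scheme the cyclic order is only a tie-breaking rule. Placing a child of a tree vertex hosted at $x$ succeeds exactly when $x$ has an unused out-neighbour (if it hosts a source) or in-neighbour (if it hosts a sink). The hypothesis controls only the \emph{average} out-degree. Your reduction yields at best $d^+(x)+d^-(x)\ge k$, which still allows a vertex reached as an in-neighbour of a sink image to have out-degree $1$ while it must host a source of $T$ with several children.

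Rotational symmetry makes the position of the \emph{starting} arc uniform. But later images are neighbours of earlier ones and are not uniformly distributed, so linearity of expectation over the $k$ tree edges cannot transfer the average-degree bound to them. Charging each collision to a previously used vertex only bounds collisions by roughly $k$ per step. There is no reason for that to be smaller than the number of suitable neighbours of that particular $x$. So $\mathbb E[\#\text{failures}]<\mathbb E[\#\text{attempts}]$ is a restatement of the goal, not a consequence of your three bullet points.

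What is missing is an inductive invariant that survives gluing two subtrees at a vertex. The paper works with $T$ together with a root $r$ and a root arc $e$ at $r$. It bounds by $(e(T)-1)n$ the expected number of rooted arcs $(uv,u)$ (out-arcs if $r$ is a source, in-arcs if a sink) that do \emph{not} extend to a copy of $T$ with $r\mapsto u$, $e\mapsto uv$, and all of $V(T)$ inside the cyclic interval $[u,v]_\pi$. Confining copies to intervals is what later makes disjointness automatic. Two measure-preserving bijections of the space of cyclic orderings drive the induction.
\begin{enumerate}
\item \emph{Reversal.} The map $\pi\mapsto\overline\pi$ pivots the root across $e$, turning out-arc counts into in-arc counts; this is the rigorous form of your ``out- and in-degrees sum to the same total'' intuition. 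So one may assume $r$ is not a leaf and split $T=T_1\cup T_2$ at $r$, with $e\in E(T_2)$.
\item \emph{Interval reversal $\varphi_u$.} The map $\varphi_u$ reverses the interval from $u$ to the first $v_j$ such that $(uv_j,u)$ spans $T_1$. Then every $(uv_i,u)$ with $i>j$ that spans $T_2$ in $\varphi_u(\pi)$ spans $T$ in $\pi$. This gives pointwise
$\mathrm{b}^{\mathrm{out}}_{\pi,u}(T,r,e)\le \mathrm{b}^{\mathrm{out}}_{\pi,u}(T_1,r,e_1)+\mathrm{b}^{\mathrm{out}}_{\varphi_u(\pi),u}(T_2,r,e)+1$.
\end{enumerate}
Since $\varphi_u$ is a bijection, taking expectations and summing over $u$ gives the bound $(e(T)-1)n$. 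This is less than the $dn$ rooted out-arcs available, which proves the theorem. A vertex-by-vertex greedy process has no analogue of this additive, subtree-level bound.
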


Note that \Cref{conj:main} follows from \Cref{thm:main directed} by replacing each edge in an undirected host graph of average degree $d$ with a digon in order to obtain a digraph with average out-degree $d$.

We remark that alternative proofs of \Cref{thm:main directed} inspired by the recent resolution of the Erd\H{o}s-S\'os Conjecture in~\cite{astra2026counting} have also independently been given by DeBiasio~\cite{debiasio2026erdos}, Santos, Stein, and Williams~\cite{SantosSteinWilliams2026Butterflies}, and Riordan and Scott~\cite{riordan2026shortproof}.

Another directed strengthening of the Erd\H{o}s-S\'os Conjecture for Euerlian digraphs was recently proved in~\cite{MubayiVerstraete2026Digraphs}, and a version for antidirected forests was given in~\cite{liu2026antidirected}.

\section{The proof}
\subsection{Notation and setup}
Let $D$ be a fixed $n$-vertex digraph throughout with average out-degree $d$. Let $C_D$ denote the set of cyclic orderings of $V(D)$, which we identify with the set of ordered $n$-tuples of the form $(v_1, \ldots, v_n)$ (indices are modulo $n$) with $V(D) = \{v_1, \ldots, v_n\}$, modulo the relation $\sim$ given by $(v_1, \ldots, v_n) \sim (v_j, v_{j+1}, \ldots, v_{j-1})$ for all $1 \leq j \leq n$. Given a cyclic ordering $\pi \in C_D$ represented by $(v_1, \ldots, v_n)$, the \emph{interval} of $\pi$ from $v_i$ to $v_j$ is the sequence $[v_i, v_j]_\pi := (v_i, v_{i+1}, \ldots, v_j)$ (depicted counterclockwise from $v_i$ to $v_j$ in our figures). The $\sim$-equivalence class of $(v_n, \ldots, v_1)$ is called the \emph{reversal} of $\pi$, which we denote by $\overline \pi$.

A \emph{rooted arc} of $D$ is an ordered pair $(e, u)$, where $e \in E(D)$, and $u \in V(D)$ is a designated \emph{root vertex} incident to $e$ in $D$. If $u$ is the source of $e$, then $(e,u)$ is called a \emph{rooted out-arc}. If $u$ is the sink of $e$, then $(e,u)$ is called a \emph{rooted in-arc}.
Let $T$ be an oriented tree with at least one arc.
If $r \in V(T)$, and $e \in E(T)$ is incident to $r$ in $T$, we call the triple $(T,r,e)$ a \emph{strongly rooted oriented tree} with \emph{root vertex} $r$ and \emph{root arc} $e$.
Given a cyclic ordering $\pi \in C_D$, we say that a rooted arc $(uv, u)$ of $D$ \emph{spans} a copy of $(T,r,e)$ in $\pi$ if there is an embedding of $T$ into~$D$ mapping $V(T)$ into the set of vertices in $[u,v]_\pi$, and mapping $r$ to $u$ and $e$ to~$uv$.
Let $\bad_\pi^{\mathrm{out}} (T,r,e)$ and $\bad_\pi^{\mathrm{in}} (T,r,e)$ denote the number of rooted out-arcs and in-arcs of $D$, respectively, which do \emph{not} span a copy of $(T,r,e)$ in~$\pi$. Similarly, for each vertex $u \in V(D)$, let $\bad_{\pi, u}^{\mathrm{out}} (T, r, e)$ and $\bad_{\pi, u}^{\mathrm{in}} (T, r, e)$ denote the respective numbers of such rooted out-arcs and in-arcs of $D$ with root vertex~$u$.
\begin{figure}[h]
\centering
\begin{tikzpicture}[vertices/.style={draw, fill=black, circle, inner sep=0pt, minimum size = 4pt, outer sep=0pt}, rootvertices/.style={draw, color=rootcolor, fill=rootcolor, star, inner sep=0pt, minimum size = 8pt, outer sep=0pt}]

\tikzset{>=latex}

\pgfmathsetmacro{\Rad}{1.8}
\pgfmathsetmacro{\Num}{15}
\pgfmathsetmacro{\light}{0.7}
\pgfmathsetmacro{\heavy}{2}

\node[rootvertices] (r) at (-5,1) {};
\node at (-5,1.5) {\Large \textcolor{rootcolor}{$r$}};
\node[vertices] (t_1) at (-6,0) {};
\node[vertices] (t_11) at (-6.5,-1) {};
\node[vertices] (t_12) at (-5.5,-1) {};
\node[vertices] (t_3) at (-5.2,0) {};
\node[vertices] (t_2) at (-4,0) {};
\node[vertices] (t_21) at (-3.5,-1) {};

\path [->,draw=black, line width= \light] (r) edge (t_1);
\path [<-,draw=black, line width= \light] (t_1) edge (t_11);
\path [<-,draw=black, line width= \light] (t_1) edge (t_12);
\path [->,draw=black, line width= \light] (r) edge (t_3);
\path [->,fill=rootcolor,draw=rootcolor, line width= \heavy] (r) edge (t_2);
\node at (-4.35, 0.85) {\Large \textcolor{rootcolor}{$e$}};
\path [<-,draw=black, line width= \light] (t_2) edge (t_21);

\node at (-5,-1.5) {\Large $T$};

\draw[teal, line width= 5,opacity=0.2] (0,0) circle (\Rad);

\foreach \i in {1,...,\Num} {
\ifnum\i=1
            \node[rootvertices] (v_\i) at (\i * 360/\Num - 6*360/\Num : \Rad) {};
        \fi
\ifnum\i>1
            \node[vertices] (v_\i) at (\i * 360/\Num - 6*360/\Num : \Rad) {};
        \fi
}
\node at (1 * 360/\Num - 6*360/\Num : \Rad + 0.4) {\Large \textcolor{rootcolor}{$u$}};
\node at (8 * 360/\Num - 6*360/\Num : \Rad + 0.4) {\Large $v$};

\path [->, draw=black, line width= \light, bend left = 30] (v_1) edge (v_5);
\path [<-, draw=black, line width= \light, bend right = 30] (v_5) edge (v_4);
\path [<-, draw=black, line width= \light, bend right = 30] (v_5) edge (v_3);
\path [->, draw=black, line width= \light, bend left = 30] (v_1) edge (v_2);
\path [->, fill=rootcolor, draw=rootcolor, line width= \heavy, bend left = 30] (v_1) edge (v_8);
\node at (-.55,.55) {\Large \textcolor{rootcolor}{$uv$}};
\path [<-, draw=black, line width= \light, bend right = 30] (v_8) edge (v_6);

\node at (0,-2.5) {\large $D$, ordered by $\pi$};
\end{tikzpicture}
\caption{A rooted out-arc $(uv,u)$ in $D$ spanning a copy of the strongly rooted oriented tree $(T,r,e)$ in $\pi$.}\label{fig:spanning arc directed}
\end{figure}

\subsection{Main inductive lemma}
The main lemma that we will show is the following, which we will prove by induction on $e(T)$.

\begin{lemma}\label{lem:main random directed}
    Let $\pi \in C_D$ be a uniformly random cyclic ordering of $V(D)$. Let $(T,r,e)$ be an antidirected strongly rooted oriented tree. If $r$ is a source, then
    \[\mathbb E[\bad_\pi^{\mathrm{out}}(T,r,e)] \leq (e(T) - 1)n.\]
    If $r$ is a sink, then
    \[\mathbb E[\bad_\pi^{\mathrm{in}}(T,r,e)] \leq (e(T) - 1)n.\]
\end{lemma}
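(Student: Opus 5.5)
We will prove \Cref{lem:main random directed} by induction on $e(T)$. First, the sink case reduces to the source case by reversing every arc of both $D$ and $T$. This swaps rooted in-arcs with rooted out-arcs and preserves antidirectedness. It does not change $\pi$ or the intervals, hence not which arcs span which trees. So we may assume $r$ is a source. The base case $e(T)=1$ is immediate: $T$ is a single arc $rs$, and every rooted out-arc $(uv,u)$ spans it via $r\mapsto u$, $s\mapsto v$, so $\bad_\pi^{\mathrm{out}}=0$. The two tools used throughout are the following. First, the reversal $\overline\pi$ of a uniformly random $\pi$ is again uniformly random, so the induction hypothesis applies to $\overline\pi$ as well. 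Second, we use a per-vertex counting argument: the arcs at a fixed vertex are ordered by the cyclic position of their other endpoint, which yields a loss of at most $1$ per vertex, hence at most $n$ in total.

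\emph{Case 1: $r$ has degree at least $2$ in $T$.} Pick another arc $f$ at $r$ and split $T$ at $r$ into two antidirected subtrees $T_1\ni e$ and $T_2\ni f$ sharing only $r$. In both, $r$ is a source, and $e(T_1)+e(T_2)=e(T)$. Fix $u$ and list its out-neighbours in the order they appear along $\pi$ after $u$. The key observation is that a copy of $(T_2,r,f)$ spanned by $(uw,u)$ in $\overline\pi$ lies in $[u,w]_{\overline\pi}$, which runs from $u$ backwards around the circle. Suppose $w$ comes after $v$ in this list. Then $[u,v]_\pi$ and $[u,w]_{\overline\pi}$ meet only in $u$, and the two copies glue to a copy of $T$ inside $[u,v]_\pi$. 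Hence $(uv,u)$ spans $(T,r,e)$ in $\pi$ whenever it spans $(T_1,r,e)$ in $\pi$ and some later out-neighbour $w$ has $(uw,u)$ spanning $(T_2,r,f)$ in $\overline\pi$. Now let $w^*$ be the last out-neighbour that is good for $T_2$ in $\overline\pi$, if one exists. Every arc $(uv,u)$ that is bad for $T$ then falls into one of three classes: it is bad for $T_1$ in $\pi$; or it is $uw^*$; or $v$ lies after $w^*$ and so $uv$ is bad for $T_2$ in $\overline\pi$. This gives
\[
\bad^{\mathrm{out}}_{\pi,u}(T,r,e)\le \bad^{\mathrm{out}}_{\pi,u}(T_1,r,e)+\bad^{\mathrm{out}}_{\overline\pi,u}(T_2,r,f)+1 .
\]
Summing over $u$, taking expectations and applying induction to both terms gives the bound $(e(T_1)-1)n+(e(T_2)-1)n+n=(e(T)-1)n$.

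\emph{Case 2: $r$ is a leaf.} Write $e=rs$; then $s$ is a sink, and it has some other in-arc $f=ts$ since $e(T)\ge2$. Let $T'=T-r$, rooted at $s$ with root arc $f$. It is antidirected with $s$ a sink, and $e(T')=e(T)-1$. The rooted out-arc $(uv,u)$ corresponds bijectively to the rooted in-arc $(uv,v)$. Fix $v$ and order its in-neighbours $x$ by distance from $v$ going backwards, i.e.\ by position along $\overline\pi$ from $v$. Suppose some in-neighbour $x$ strictly closer to $v$ than $u$ has $(xv,v)$ spanning $(T',s,f)$ in $\overline\pi$. That copy lies in $[x,v]_\pi\subseteq[u,v]_\pi\setminus\{u\}$, so adding $u\mapsto r$ gives a copy of $T$ spanned by $(uv,u)$. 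The bad arcs at $v$ are therefore those strictly before the first good in-arc, which are bad for $T'$ in $\overline\pi$, together with possibly that first good arc itself. This gives $\le \bad^{\mathrm{in}}_{\overline\pi,v}(T',s,f)+1$. Summing over $v$ and using induction on $\overline\pi$ yields $(e(T)-2)n+n=(e(T)-1)n$.

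The main obstacle I expect is Case 1: choosing the right geometric arrangement so that the two subtree copies are automatically disjoint. The idea of embedding $T_2$ in the reversed ordering, so that it occupies the complementary arc on the other side of $u$, is what makes this work. The remaining care is in bookkeeping the $+1$ per vertex for the boundary arc $w^*$, and in checking that the case where no good $w^*$ exists is also covered, since then every arc at $u$ is bad for $T_2$.
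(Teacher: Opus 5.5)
Your base case, your reduction of the sink case to the source case (reverse all arcs of $D$ and $T$, with the induction quantified over all digraphs), and your Case 2 are all fine. Case 2 is in fact a valid alternative to the paper's pivot lemma in the leaf case, because the copy of $T-r$ spanned by $(xv,v)$ in $\overline\pi$ sits in $[x,v]_\pi\subseteq[u,v]_\pi\setminus\{u\}$.

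The gap is in Case 1, which is the heart of the induction. You correctly note that $[u,v]_\pi$ and $[u,w]_{\overline\pi}$ meet only in $u$. But as a set, $[u,w]_{\overline\pi}$ equals $[w,u]_\pi$, the part of the circle on the \emph{other} side of $u$. So the glued copy of $T$ lies in $[w,v]_\pi$, not in $[u,v]_\pi$, and $(uv,u)$ need not span $(T,r,e)$ in $\pi$, which is exactly what the induction requires. The disjointness you arranged is precisely what pushes the $T_2$-copy out of the spanning interval.

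Your per-vertex inequality is in fact false. Take $D$ on $\{u,v_1,v_2,y\}$ with arcs $uv_1,uv_2,yv_1,yv_2$, and let $\pi=(u,v_1,v_2,y)$. Let $T$ have arcs $e=ra$, $f=rb$ and $cb$, split as $T_1=\{ra\}$ and $T_2=\{rb,cb\}$.
\begin{itemize}
\item $\bad^{\mathrm{out}}_{\pi,u}(T_1,r,e)=0$, since $T_1$ is a single arc.
\item $\bad^{\mathrm{out}}_{\overline\pi,u}(T_2,r,f)=0$, because $y$ lies in both $[u,v_1]_{\overline\pi}$ and $[u,v_2]_{\overline\pi}$.
\item $\bad^{\mathrm{out}}_{\pi,u}(T,r,e)=2$, since $[u,v_1]_\pi$ and $[u,v_2]_\pi$ have fewer than four vertices.
\end{itemize}
So your bound would read $2\le 0+0+1$. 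Your gluing would produce the copy $r,a,b,c\mapsto u,v_1,v_2,y$, which occupies $[v_2,v_1]_\pi$ rather than $[u,v_1]_\pi$.

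The missing idea is a distribution-preserving rearrangement that depends on $u$ and $\pi$; a single global reversal cannot do this. The paper proceeds as follows.
\begin{itemize}
\item Take $j$ to be the \emph{first} index such that $(uv_j,u)$ spans, in $\pi$, the subtree not containing the root arc (your $(T_2,r,f)$).
\item Set $\varphi_u(\pi)=(v_j,\ldots,v_2,u,v_{j+1},\ldots,v_n)$, reversing only the block $[u,v_j]_\pi$.
\item For $i>j$, any copy of the root-arc subtree spanned by $(uv_i,u)$ in $\varphi_u(\pi)$ lies in $\{u,v_{j+1},\ldots,v_i\}$. Together with the $T_2$-copy in $\{u,v_2,\ldots,v_j\}$, it gives a copy of $T$ inside $[u,v_i]_\pi$.
\item $\varphi_u$ is a bijection of $C_D$: composed with reversal it is an involution that fixes $[u,v_j]$, and hence fixes $j$. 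So $\varphi_u(\pi)$ is uniform.
\end{itemize}
Note that the roles are swapped relative to yours. The piece without the root arc is located in $\pi$ at its first good arc, and the root-arc piece is counted in the twisted ordering.

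This case cannot be sidestepped, even with the pivot lemma. For example, the path with arcs $ra, rs, bs$, rooted at $(r, rs)$, has neither endpoint of the root arc a leaf.
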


Note that \Cref{thm:main directed} immediately follows. In fact, \Cref{lem:main random directed} further implies the stronger fact that for any antidirected tree $T$ and any arc $e \in E(T)$, there are at least $(d - e(T)+1)n$ arcs of $D$ which play the role of $e$ in some copy of $T$ in $D$. As far as we can tell, the other recent proofs of \Cref{conj:main} and \Cref{thm:main directed} do not give this stronger claim.
\begin{proof}[Proof of \Cref{thm:main directed}]
Let $T$ be an antidirected tree with $k \geq 1$ arcs, and let $D$ be an $n$-vertex digraph with average out-degree $d > k-1$. Let $r$ be any source vertex of $T$, and let $e$ be any arc of $T$ incident to $r$. Let $\pi \in C_D$ be chosen uniformly at random. Since $D$ has exactly~$dn$ rooted out-arcs, which is greater than $(k-1)n \geq \mathbb E[\bad_\pi^{\mathrm{out}}(T,r,e)]$ by \Cref{lem:main random directed}, we know that some rooted arc of~$D$ must span a copy of $(T,r,e)$ in some cyclic ordering $\pi$ of $V(D)$.
\end{proof}

The rest of this section is devoted to proving \Cref{lem:main random directed}. The base case is trivial. The inductive step will consist of two steps. First, we show that we can always ``pivot'' the root vertex of our tree $T$ across the root arc for the sake of counting $\mathbb E[\bad_\pi^{\mathrm{out}}(T,r,e)]$ or $\mathbb E[\bad_\pi^{\mathrm{in}}(T,r,e)]$, which allows us to focus on the case in which $r$ is not a leaf of $T$. Second, we obtain a bound on $\mathbb E[\bad_\pi^{\mathrm{out}}(T,r,e)]$ or $\mathbb E[\bad_\pi^{\mathrm{in}}(T,r,e)]$ in terms of corresponding counts on two proper subtrees of $T$. The following lemma accomplishes the first step.

\begin{lemma}\label{lem:pivot}
    Let $\pi \in C_D$ be a uniformly random cyclic ordering of $V(D)$. Then for any antidirected oriented tree $T$ and any arc $e = rr'$ of $T$, we have
    \[\mathbb E[\bad_\pi^{\mathrm{out}}(T,r,e)] = \mathbb E[\bad_\pi^{\mathrm{in}}(T,r',e)].\]
\end{lemma}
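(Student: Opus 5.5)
The identity should follow from a bijection between rooted out-arcs paired with orderings and rooted in-arcs paired with orderings, using the reversal $\pi \mapsto \overline\pi$. Since reversal is a bijection of $C_D$, it preserves the uniform distribution, so $\overline\pi$ is also uniformly random. It therefore suffices to prove the deterministic statement that, for every $\pi \in C_D$ and every arc $uv \in E(D)$,
\[
(uv,u) \text{ spans a copy of } (T,r,e) \text{ in } \pi
\iff
(uv,v) \text{ spans a copy of } (T,r',e) \text{ in } \overline\pi .
\]
Here $(uv,u)$ is a rooted out-arc and $(uv,v)$ is a rooted in-arc, since $v$ is the sink of $uv$. Summing this equivalence over the arcs $uv$ of $D$ gives $\bad_\pi^{\mathrm{out}}(T,r,e) = \bad_{\overline\pi}^{\mathrm{in}}(T,r',e)$ pointwise. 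Taking expectations, and using that $\overline\pi$ is uniform, then yields the lemma.

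\textbf{Checking the equivalence.}

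\emph{Vertex sets.} With $\pi = (v_1,\dots,v_n)$, $u=v_i$ and $v=v_j$, the interval $[u,v]_\pi = (v_i,\dots,v_j)$. In $\overline\pi$, represented by $(v_n,\dots,v_1)$, the interval from $v$ to $u$ is $[v,u]_{\overline\pi} = (v_j, v_{j-1},\dots,v_i)$. This is the same set of vertices read backwards. Spanning depends only on the underlying vertex set of the interval, so the two intervals impose identical containment constraints.

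\emph{Embedding conditions.} Take an embedding $\phi$ of $T$ into $D$ with image in this common set. For $(uv,u)$ to span $(T,r,e)$ we need $\phi(r)=u$ and $\phi(e)=uv$. Since $e=rr'$ and $\phi$ preserves arcs, this forces $\phi(r')=v$. Conversely, for $(uv,v)$ to span $(T,r',e)$ we need $\phi(r')=v$ and $\phi(e)=uv$, which forces $\phi(r)=u$. So both conditions say exactly: $\phi(r)=u$, $\phi(r')=v$, and the image of $\phi$ lies in the common vertex set. The same embeddings witness both sides.

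\textbf{Where care is needed.} The argument hinges on the orientation conventions. Orientation is already forced by the arc $e = rr'$: $r$ is its source and $r'$ its sink, so the rooted arc spanning $(T,r',e)$ must be rooted at the sink $v$ of $uv$, i.e. it is an in-arc. The interval in the definition of spanning runs from the root vertex to the other endpoint of the root arc in the given cyclic order. For the in-arc $(uv,v)$ in $\overline\pi$, that is $[v,u]_{\overline\pi}$, which is the reversal of $[u,v]_\pi$ as computed above.

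The antidirected hypothesis is not actually used here; it is only carried along for later use in the induction. No other difficulty should arise beyond confirming that the reversal map matches out-arc roots to in-arc roots.
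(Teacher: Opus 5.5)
Your proposal is correct and is essentially the paper's proof. You get the pointwise identity $\bad_\pi^{\mathrm{out}}(T,r,e) = \bad_{\overline\pi}^{\mathrm{in}}(T,r',e)$ from the correspondence $(uv,u) \leftrightarrow (uv,v)$ under reversal, then use that reversal is a bijection of $C_D$, so $\overline\pi$ is again uniform; your checks of the interval and embedding conditions spell out what the paper leaves to its figure, and you are right that antidirectedness is not used here.
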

\begin{figure}[h]
\centering
\begin{tikzpicture}[vertices/.style={draw, fill=black, circle, inner sep=0pt, minimum size = 4pt, outer sep=0pt}, rootvertices/.style={draw, color=rootcolor, fill=rootcolor, star, inner sep=0pt, minimum size = 8pt, outer sep=0pt}]

\tikzset{>=latex}

\pgfmathsetmacro{\Rad}{1.8}
\pgfmathsetmacro{\Num}{15}
\pgfmathsetmacro{\light}{0.7}
\pgfmathsetmacro{\heavy}{2}
\pgfmathsetmacro{\treeone}{1.5}
\pgfmathsetmacro{\treetwo}{-1.5}
\pgfmathsetmacro{\treestretch}{0.7}

\node[rootvertices] (r) at ($(-5,1*\treestretch) + (0,\treeone)$) {};
\node at ($(-5,1.5*\treestretch)+ (0,\treeone)$) {\Large \textcolor{rootcolor}{$r$}};
\node[vertices] (t_1) at ($(-6,0*\treestretch)+ (0,\treeone)$) {};
\node[vertices] (t_11) at ($(-6.5,-1*\treestretch)+ (0,\treeone)$) {};
\node[vertices] (t_12) at ($(-5.5,-1*\treestretch)+ (0,\treeone)$) {};
\node[vertices] (t_3) at ($(-5.2,0*\treestretch)+ (0,\treeone)$) {};
\node[vertices] (t_2) at ($(-4,0*\treestretch)+ (0,\treeone)$) {};
\node at ($(-3.5,0.3*\treestretch)+ (0,\treeone)$) {\Large $r'$};
\node[vertices] (t_21) at ($(-3.5,-1*\treestretch)+ (0,\treeone)$) {};

\path [->,draw=black, line width= \light] (r) edge (t_1);
\path [<-,draw=black, line width= \light] (t_1) edge (t_11);
\path [<-,draw=black, line width= \light] (t_1) edge (t_12);
\path [->,draw=black, line width= \light] (r) edge (t_3);
\path [->,fill=rootcolor,draw=rootcolor, line width= \heavy] (r) edge (t_2);
\node at ($(-4.35, 0.85*\treestretch) + (0,\treeone)$) {\Large \textcolor{rootcolor}{$e$}};
\path [<-,draw=black, line width= \light] (t_2) edge (t_21);

\node at ($(-5,-1.6*\treestretch)+ (0,\treeone)$) {\Large $(T,r,e)$};

\node[vertices] (r) at ($(-5,1*\treestretch) + (0,\treetwo)$) {};
\node at ($(-5,1.5*\treestretch)+ (0,\treetwo)$) {\Large $r$};
\node[vertices] (t_1) at ($(-6,0*\treestretch)+ (0,\treetwo)$) {};
\node[vertices] (t_11) at ($(-6.5,-1*\treestretch)+ (0,\treetwo)$) {};
\node[vertices] (t_12) at ($(-5.5,-1*\treestretch)+ (0,\treetwo)$) {};
\node[vertices] (t_3) at ($(-5.2,0*\treestretch)+ (0,\treetwo)$) {};
\node[rootvertices] (t_2) at ($(-4,0*\treestretch)+ (0,\treetwo)$) {};
\node at ($(-3.5,0.3*\treestretch)+ (0,\treetwo)$) {\Large \textcolor{rootcolor}{$r'$}};
\node[vertices] (t_21) at ($(-3.5,-1*\treestretch)+ (0,\treetwo)$) {};

\path [->,draw=black, line width= \light] (r) edge (t_1);
\path [<-,draw=black, line width= \light] (t_1) edge (t_11);
\path [<-,draw=black, line width= \light] (t_1) edge (t_12);
\path [->,draw=black, line width= \light] (r) edge (t_3);
\path [->,fill=rootcolor,draw=rootcolor, line width= \heavy] (r) edge (t_2);
\node at ($(-4.35, 0.85*\treestretch) + (0,\treetwo)$) {\Large \textcolor{rootcolor}{$e$}};
\path [<-,draw=black, line width= \light] (t_2) edge (t_21);

\node at ($(-5,-1.6*\treestretch)+ (0,\treetwo)$) {\Large $(T,r',e)$};

\draw[teal, line width= 5,opacity=0.2] (0,0) circle (\Rad);

\foreach \i in {1,...,\Num} {
\ifnum\i=1
            \node[rootvertices] (v_\i) at (\i * 360/\Num - 6*360/\Num : \Rad) {};
        \fi
\ifnum\i>1
            \node[vertices] (v_\i) at (\i * 360/\Num - 6*360/\Num : \Rad) {};
        \fi
}
\node at (1 * 360/\Num - 6*360/\Num : \Rad + 0.4) {\Large \textcolor{rootcolor}{$v_i$}};
\node at (2 * 360/\Num - 6*360/\Num : \Rad + 0.4) {\Large $v_{i+1}$};
\node at (\Num * 360/\Num - 6*360/\Num : \Rad + 0.4) {\Large $v_{i-1}$};
\node at (8 * 360/\Num - 6*360/\Num : \Rad + 0.4) {\Large $v_j$};
\node at (9 * 360/\Num - 6*360/\Num : \Rad + 0.4) {\Large $v_{j+1}$};
\node at (7 * 360/\Num - 6*360/\Num : \Rad + 0.5) {\Large $v_{j-1}$};

\path [->, draw=black, line width= \light, bend left = 30] (v_1) edge (v_5);
\path [<-, draw=black, line width= \light, bend right = 30] (v_5) edge (v_4);
\path [<-, draw=black, line width= \light, bend right = 30] (v_5) edge (v_3);
\path [->, draw=black, line width= \light, bend left = 30] (v_1) edge (v_2);
\path [->, fill=rootcolor, draw=rootcolor, line width= \heavy, bend left = 30] (v_1) edge (v_8);
\path [<-, draw=black, line width= \light, bend right = 30] (v_8) edge (v_6);

\node at (0,-2.55) {\Large $\pi$};

\node at (3,0) {\Large $\Longleftrightarrow$};

\draw[teal, line width= 5,opacity=0.2] (6,0) circle (\Rad);

\foreach \i in {1,...,\Num} {
\ifnum\i=8
            \node[rootvertices] (v'_\i) at ($(-\i * 360/\Num + 6*360/\Num : \Rad) + (6,0)$) {};
        \fi
\ifnum\i<8
            \node[vertices] (v'_\i) at ($(-\i * 360/\Num + 6*360/\Num : \Rad) + (6,0)$) {};
        \fi
\ifnum\i>8
            \node[vertices] (v'_\i) at ($(-\i * 360/\Num + 6*360/\Num : \Rad) + (6,0)$) {};
        \fi

}
\node at ($(-1 * 360/\Num + 6*360/\Num : \Rad + 0.4) + (6,0)$) {\Large $v_i$};
\node at ($(-2 * 360/\Num + 6*360/\Num : \Rad + 0.4) + (6,0)$) {\Large $v_{i+1}$};
\node at ($(-\Num * 360/\Num + 6*360/\Num : \Rad + 0.4) + (6,0)$) {\Large $v_{i-1}$};
\node at ($(-8 * 360/\Num + 6*360/\Num : \Rad + 0.42) + (6,0)$) {\Large \textcolor{rootcolor}{$v_j$}};
\node at ($(-7 * 360/\Num + 6*360/\Num : \Rad + 0.55) + (6,0)$) {\Large $v_{j-1}$};
\node at ($(-9 * 360/\Num + 6*360/\Num : \Rad + 0.4) + (6,0)$) {\Large $v_{j+1}$};

\path [->, draw=black, line width= \light, bend right = 30] (v'_1) edge (v'_5);
\path [<-, draw=black, line width= \light, bend left = 30] (v'_5) edge (v'_4);
\path [<-, draw=black, line width= \light, bend left = 30] (v'_5) edge (v'_3);
\path [->, draw=black, line width= \light, bend right = 30] (v'_1) edge (v'_2);
\path [->,fill=rootcolor, draw=rootcolor, line width= \heavy, bend right = 30] (v'_1) edge (v'_8);
\path [<-,draw=black, line width= \light, bend left = 30] (v'_8) edge (v'_6);

\node at (6,-2.55) {\Large $\overline \pi$};
\end{tikzpicture}
\caption{The rooted out-arc $(v_iv_j,v_i)$ spans a copy of $(T,r,e)$ in $\pi$ if and only if the rooted in-arc $(v_iv_j,v_j)$ spans a copy of $(T,r',e)$ in $\overline \pi$.}\label{fig:pivot directed}
\end{figure}
\begin{proof}
    Note that a rooted out-arc $(uv,u)$ spans a copy of $(T,r,e)$ in $\pi$ if and only if the rooted in-arc $(uv,v)$ spans a copy of $(T,r',e)$ in $\overline \pi$ (see \Cref{fig:pivot directed}), so $\bad_\pi^{\mathrm{out}}(T,r,e) = \bad_{\overline \pi}^{\mathrm{in}}(T,r',e)$. Now because the reversal map $\pi \mapsto \overline \pi$ is an involution on $C_D$, we have that $\overline \pi$ has the same (uniform) distribution as $\pi$, so
    \[\mathbb E[\bad_\pi^{\mathrm{out}}(T,r,e)] = \mathbb E[\bad_{\overline \pi}^{\mathrm{in}}(T,r',e)] = \mathbb E[\bad_{\pi}^{\mathrm{in}}(T,r',e)]. \qedhere\]
\end{proof}

The next lemma handles the second step.

\begin{lemma}\label{lem:gluing random directed}
Let $\pi \in \bad_n$ be a uniformly random cyclic ordering of $V(D)$. Let $T_1$ and $T_2$ be two oriented trees, each with at least one arc and sharing a common vertex $r$, such that $V(T_1) \cap V(T_2) = \{r\}$, and let $T = T_1 \cup T_2$. For each $i \in \{1,2\}$, let $e_i$ be an arc incident to $r$ in $T_i$.
If $r$ is the source of both arcs $e_1$ and~$e_2$, then
\[\mathbb E[\bad_\pi^{\mathrm{out}}(T,r,e_2)] \leq \mathbb E[\bad_\pi^{\mathrm{out}}(T_1, r, e_1)] + \mathbb E[\bad_\pi^{\mathrm{out}}(T_2,r, e_2)] + n.\]
If $r$ is the sink of both arcs $e_1$ and $e_2$, then
\[\mathbb E[\bad_\pi^{\mathrm{in}}(T,r,e_2)] \leq \mathbb E[\bad_\pi^{\mathrm{in}}(T_1, r, e_1)] + \mathbb E[\bad_\pi^{\mathrm{in}}(T_2,r, e_2)] + n.\]
\end{lemma}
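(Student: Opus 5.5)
The plan is to prove the out-version and obtain the in-version by the same argument with all arcs reversed. Both statements are local at the root, so I will compare the three bad counts root by root and exploit the symmetry of a uniform cyclic ordering under moving a single vertex.

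\emph{Step 1 (the one-vertex decomposition).} Fix $u\in V(D)$. A uniform $\pi\in C_D$ is the same as a uniform cyclic ordering $\tau$ of $V(D)\setminus\{u\}$ together with an independent, uniform choice among the $n-1$ gaps of $\tau$ in which to insert $u$. Fix $\tau$ and let $w_1,\dots,w_m$ be the out-neighbours of $u$ in $\tau$-order. Then $\bad^{\mathrm{out}}_{\pi,u}(\cdot)$ depends only on $\tau$ and the chosen gap $g$. It therefore suffices to prove, for every fixed $\tau$,
\[\sum_{g}\bad^{\mathrm{out}}_{g,u}(T,r,e_2)\le \sum_g\bigl(\bad^{\mathrm{out}}_{g,u}(T_1,r,e_1)+\bad^{\mathrm{out}}_{g,u}(T_2,r,e_2)\bigr)+(n-1),\]
where all sums run over the $n-1$ gaps $g$. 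Averaging over $g$ and $\tau$ and then summing over $u$ gives the lemma, with the $+1$ per vertex summing to the $+n$.

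\emph{Step 2 (the gluing observation).} Let $g_j$ denote the gap immediately after $w_j$. Suppose that with $u$ in gap $g$ the arc $uw_j$ spans a copy of $(T_2,r,e_2)$, so that this copy lies in the interval from $u$ to $w_j$. Suppose also that with $u$ in gap $g_j$ some arc $uw_i$ spans a copy of $(T_1,r,e_1)$, where $w_i$ lies in the part of the circle after $w_j$ and before $u$'s original position $g$. Then the second interval meets the first only in $u$. Hence the two copies are vertex-disjoint except at $u$, their union is a copy of $T$ rooted at $u$ with root arc mapped to $uw_j$, and all its vertices lie in the interval from $u$ to $w_j$ in the gap-$g$ ordering. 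So $uw_j$ spans $(T,r,e_2)$. Note that $e_1,e_2$ are both out-arcs at $r$, so the two pieces meet $u$ compatibly. Consequently, an arc $uw_j$ can be $T$-bad at gap $g$ only if either it is $T_2$-bad at $g$, or every $T_1$-copy available at gap $g_j$ uses an arc $uw_i$ that wraps past the position $g$.

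\emph{Step 3 (the charging).} I then need to charge each $T$-bad arc of the second kind to a $T_1$-bad event, paying an overhead of at most one per gap. The natural charging goes as follows. As $g$ ranges over the gaps, the positions $j$ with $w_j$ ``before'' $g$ form the initial segments of the cyclic list $w_1,\dots,w_m$ rotated to start at $g$. The number of $T_1$-good arcs at $g_j$ landing in a fixed cyclic window is therefore controlled by $\bad^{\mathrm{out}}_{g_j,u}(T_1,r,e_1)$. I expect the double count to become a double sum over pairs (gap $g$, arc index $j$). For each fixed $j$, the gaps $g$ for which the gluing fails form at most a cyclic window whose length is at most the number of $T_1$-bad arcs at gap $g_j$ plus one. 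Summing over $j$ and reindexing via the bijection $g_j\leftrightarrow j$ should produce $\sum_g\bad^{\mathrm{out}}_{g,u}(T_1,r,e_1)$ plus an $(n-1)$ error term, which is exactly the inequality of Step 1.

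This cyclic bookkeeping in Step 3 is the main obstacle: arranging the windows so that the overhead is exactly one per gap rather than one per arc. If the direct window count gives $m$ instead of $n-1$, I would first try including the gaps that are not of the form $g_j$ as extra ``free'' slots, so that the overhead is indexed by gaps. The in-version then follows verbatim with in-neighbours in place of out-neighbours; it can also be obtained by applying the out-version inside $\overline\pi$, in the spirit of \Cref{lem:pivot}.
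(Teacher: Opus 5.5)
There is a genuine gap, and it lies in the reduction of Step 1 itself, not only in the bookkeeping of Step 3.

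\textbf{Step 2 is backwards.} With $u$ in gap $g$, the $T_1$-copy you use lies in the interval from $g_j$ to $w_i$. That interval is after $w_j$ and before $g$, so it is outside $[u,w_j]$ in the gap-$g$ ordering. The union therefore does not show that $uw_j$ spans $(T,r,e_2)$. The correct gluing puts the $T_1$-copy first. Let $w$ be the first out-neighbour after $g$ such that $uw$ spans $(T_1,r,e_1)$. Then any later $uw_i$ that spans $(T_2,r,e_2)$ when $u$ is moved to the gap just after $w$ also spans $(T,r,e_2)$ at $g$.

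\textbf{The fixed-$\tau$ inequality is false.} The relocation $g\mapsto{}$(gap after the first $T_1$-good out-neighbour) is not injective on the gaps of a fixed $\tau$, and no charging can repair this, because the inequality you reduce to fails. Take $T_1,T_2$ to be paths $r\to a_i\leftarrow b_i$ with $e_i=ra_i$. Let $D$ consist of the arcs $uw_1,uw_2,yw_1,yw_2$ plus $n-4$ isolated vertices.
\begin{itemize}
\item No copy of $(T,r,e_2)$ is rooted at $u$, since $b_1$ and $b_2$ would both have to map to $y$. Hence $\bad^{\mathrm{out}}_{g,u}(T,r,e_2)=2$ at every gap, and the left side is $2(n-1)$.
\item Choose $\tau$ with $y,w_2,w_1$ consecutive in this order. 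Then $uw_1$ is $T_1$-bad (and $T_2$-bad) only at the gaps just before $w_2$ and just before $w_1$. Also $uw_2$ is bad only at the gap just before $w_2$.
\item So the right side is $3+3+(n-1)=n+5$, which is less than $2n-2$ once $n\ge 8$.
\end{itemize}
The lemma itself holds in this example, because averaging over $\tau$ gives $\mathbb E[\bad^{\mathrm{out}}_{\pi,u}(T_i,r,e_i)]=1$. The fiberwise statement is strictly stronger than the lemma, and false.

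\textbf{The missing idea.} The measure-preserving map must be allowed to change the relative order of the other vertices. The paper's $\varphi_u$ sends $(u,v_2,\dots,v_n)$ to $(v_j,\dots,v_2,u,v_{j+1},\dots,v_n)$, where $v_j$ is the first $T_1$-good out-neighbour. This reverses the block $[u,v_j]_\pi$, so the $T_1$-copy survives while $u$ now sits next to $v_{j+1}$. Bijectivity holds because $\pi\mapsto\overline{\varphi_u(\pi)}$ is an involution that preserves $[u,v_j]_\pi$, and hence preserves $j$. This gives the pointwise bound
\[\bad^{\mathrm{out}}_{\pi,u}(T,r,e_2)\le \bad^{\mathrm{out}}_{\pi,u}(T_1,r,e_1)+\bad^{\mathrm{out}}_{\varphi_u(\pi),u}(T_2,r,e_2)+1.\]
Taking expectations then yields the lemma directly, with no window counting.
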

\begin{proof}
   We will assume that $r$ is a source of both $e_1$ and $e_2$. The sink case is completely analogous. Fix a vertex $u \in V(D)$. We define a bijection $\varphi_u : C_D \to C_D$ as follows. Let $\pi \in C_D$, and let $(u, v_2, \ldots, v_n)$ be the representation of $\pi$ whose first component is $u$. Let $j = j_{\pi,u}$ be the minimum index with $2 \leq j \leq n$ such that $(uv_j, u)$ is a rooted out-arc in $D$ that spans a copy of $(T_1, r, e_1)$ in $\pi$, if such an index exists. Otherwise, define $j_{\pi,u} := n$. 
   We then define $\varphi_u(\pi)$ to be the $\sim$-equivalence class of $(v_j, \ldots, v_2, u, v_{j+1}, \ldots, v_n)$.
To see that $\varphi_u$ is bijective, simply note that the composition of $\varphi_u$ with the reversal map, namely $\overline \varphi_u(\pi) := \overline{\varphi_u(\pi)}$, preserves the interval $[u, v_{j_{\pi, u}}]_{\pi}$ for every $\pi \in C_D$, and so $j_{\overline \varphi_u(\pi),u} = j_{\pi,u}$. Consequently, $\overline \varphi_u$ is an involution $C_D \to C_D$, which means that $\pi \mapsto \overline \varphi_u(\overline \pi)$ is the inverse of the function of $\varphi_u$.

\begin{figure}[h]
\centering
\begin{tikzpicture}[vertices/.style={draw, fill=black, circle, inner sep=0pt, minimum size = 4pt, outer sep=0pt},
rootvertices/.style={draw, color=rootcolor, fill=rootcolor, star, inner sep=0pt, minimum size = 8pt, outer sep=0pt}, onevertices/.style={draw, color=onecolor, fill=onecolor, circle, inner sep=0pt, minimum size = 4pt, outer sep=0pt}, twovertices/.style={draw, color=rootcolor, fill=rootcolor, circle, inner sep=0pt, minimum size = 4pt, outer sep=0pt}]

\tikzset{>=latex}

\pgfmathsetmacro{\Rad}{1.8}
\pgfmathsetmacro{\Num}{15}
\pgfmathsetmacro{\light}{0.7}
\pgfmathsetmacro{\heavy}{2}

\node[rootvertices] (r) at (-5,1) {};
\node at (-5,1.5) {\Large \textcolor{rootcolor}{$r$}};
\node[onevertices] (t_1) at (-6,0) {};
\node at (-6.5,0.3) {\textcolor{onecolor}{\Large $T_1$}};
\node[onevertices] (t_11) at (-6.5,-1) {};
\node[onevertices] (t_12) at (-5.5,-1) {};
\node[onevertices] (t_3) at (-5.2,0) {};
\node[twovertices] (t_2) at (-4,0) {};
\node at (-3.5,0.3) {\textcolor{rootcolor}{\Large $T_2$}};
\node[twovertices] (t_21) at (-3.5,-1) {};

\path [->,draw=onecolor,fill=onecolor, line width= \heavy] (r) edge (t_1);
\node at (-5.75, 0.75) {\Large \textcolor{onecolor}{$e_1$}};
\path [<-,draw=onecolor,fill=onecolor, line width= \light] (t_1) edge (t_11);
\path [<-,draw=onecolor,fill=onecolor, line width= \light] (t_1) edge (t_12);
\path [->,draw=onecolor,fill=onecolor, line width= \light] (r) edge (t_3);
\path [->,fill=rootcolor,draw=rootcolor, line width= \heavy] (r) edge (t_2);
\node at (-4.25, 0.75) {\Large \textcolor{rootcolor}{$e_2$}};
\path [<-,draw=rootcolor,fill=rootcolor, line width= \light] (t_2) edge (t_21);

\node at (-5,-1.5) {\Large $T$};

\draw[teal, line width= 5,opacity=0.2] (0,0) circle (\Rad);

\foreach \i in {1,...,\Num} {
\ifnum\i=5
            \node[rootvertices] (v_\i) at (\i * 360/\Num - 6*360/\Num : \Rad) {};
        \fi
\ifnum\i<5
            \node[vertices] (v_\i) at (\i * 360/\Num - 6*360/\Num : \Rad) {};
        \fi
\ifnum\i>5
            \node[vertices] (v_\i) at (\i * 360/\Num - 6*360/\Num : \Rad) {};
        \fi
}
\node at (1 * 360/\Num - 6*360/\Num : \Rad + 0.4) {\Large $v_j$};
\node at (8 * 360/\Num - 6*360/\Num : \Rad + 0.4) {\Large $v_i$};
\node at (5 * 360/\Num - 6*360/\Num : \Rad + 0.4) {\Large \textcolor{rootcolor}{$u$}};
\node at (4 * 360/\Num - 6*360/\Num : \Rad + 0.4) {\Large $v_2$};
\node at (6 * 360/\Num - 6*360/\Num : \Rad + 0.6) {\Large $v_{j+1}$};
\node at (\Num * 360/\Num - 6*360/\Num : \Rad + 0.6) {\Large $v_n$};

\path [->, draw=onecolor,fill=onecolor, line width= \heavy, bend right = 30] (v_5) edge (v_1);
\path [<-, draw=onecolor, fill=onecolor, line width= \light, bend left = 30] (v_1) edge (v_2);
\path [<-, draw=onecolor,fill=onecolor, line width= \light, bend left = 30] (v_1) edge (v_3);
\path [->, draw=onecolor,fill=onecolor, line width= \light, bend right = 30] (v_5) edge (v_4);
\path [->, fill=rootcolor, draw=rootcolor, line width= \heavy, bend left = 30] (v_5) edge (v_8);
\path [<-, draw=rootcolor,fill=rootcolor, line width= \light, bend right = 30] (v_8) edge (v_6);

\node (sw_u) at (5 * 360/\Num - 6*360/\Num : \Rad + .75) {};
\node (sw_vj) at (1 * 360/\Num - 6*360/\Num : \Rad + .75) {};
\path [<->, draw=black, line width = \light, bend left = 40] (sw_u) edge (sw_vj);

\node at (0,-3) {\Large $\varphi_u(\pi)$};

\node at (3.5,0) {\Large $\Rightarrow$};

\draw[teal, line width= 5,opacity=0.2] (6,0) circle (\Rad);

\foreach \i in {1,...,\Num} {
\ifnum\i=1
            \node[rootvertices] (v'_\i) at ($(\i * 360/\Num - 6*360/\Num : \Rad) + (6,0)$) {};
        \fi
\ifnum\i>1
            \node[vertices] (v'_\i) at ($(\i * 360/\Num - 6*360/\Num : \Rad) + (6,0)$) {};
        \fi

}
\node at ($(1 * 360/\Num - 6*360/\Num : \Rad + 0.4) + (6,0)$) {\Large \textcolor{rootcolor}{$u$}};
\node at ($(8 * 360/\Num - 6*360/\Num : \Rad + 0.4) + (6,0)$) {\Large $v_i$};
\node at ($(2 * 360/\Num - 6*360/\Num : \Rad + 0.4) + (6,0)$) {\Large $v_2$};
\node at ($(5 * 360/\Num - 6*360/\Num : \Rad + 0.4) + (6,0)$) {\Large $v_j$};
\node at ($(6 * 360/\Num - 6*360/\Num : \Rad + 0.6)+(6,0)$) {\Large $v_{j+1}$};
\node at ($(\Num * 360/\Num - 6*360/\Num : \Rad + 0.4) + (6,0)$) {\Large $v_n$};

\path [->, draw=onecolor,fill=onecolor, line width= \heavy, bend left = 30] (v'_1) edge (v'_5);
\path [<-, draw=onecolor,fill=onecolor, line width= \light, bend right = 30] (v'_5) edge (v'_4);
\path [<-, draw=onecolor,fill=onecolor, line width= \light, bend right = 30] (v'_5) edge (v'_3);
\path [->, draw=onecolor,fill=onecolor, line width= \light, bend left = 30] (v'_1) edge (v'_2);
\path [->,fill=rootcolor, draw=rootcolor, line width= \heavy, bend left = 30] (v'_1) edge (v'_8);
\path [<-,draw=rootcolor,fill=rootcolor, line width= \light, bend right = 30] (v'_8) edge (v'_6);

\node (sw_u') at ($(5 * 360/\Num - 6*360/\Num : \Rad + .75) + (6,0)$) {};
\node (sw_vj') at ($(1 * 360/\Num - 6*360/\Num : \Rad + .75) + (6,0)$) {};
\path [<->, draw=black, line width = \light, bend left = 40] (sw_u') edge (sw_vj');

\node at (6,-3) {\Large $\pi$};
\end{tikzpicture}
\caption{Every rooted out-arc of $D$ of the form $(uv_i, u)$ with $i > j = j_{\pi, u}$ which spans a copy of $(T_2, r, e_2)$ in $\varphi_u(\pi)$ must also span a copy of $(T, r, e_2)$ in $\pi$.}\label{fig:gluing directed}
\end{figure}

Now consider a fixed cyclic ordering $\pi$ represented by $(u, v_2, \ldots, v_n)$, and let $j = j_{\pi, u}$. We wish to bound $\bad_{\pi, u}^{\mathrm{out}}(T, r, e_2)$ in terms of $\bad_{\pi, u}^{\mathrm{out}}(T_1, r, e_1)$ and $\bad_{\varphi_u(\pi), u}^{\mathrm{out}}(T_2, r, e_2)$. The key observation is that whenever $j < n$, we know that $(uv_j, u)$ spans a copy of $(T_1, r, e_1)$ in $\pi$, so every rooted out-arc of $D$ of the form $(uv_i, u)$ with $i > j$ which spans a copy of $(T_2, r, e_2)$ in $\varphi_u(\pi)$ must also span a copy of $(T, r, e_2)$ in~$\pi$ (see \Cref{fig:gluing directed}). Thus the number of rooted arcs of the form $(uv_i, u)$ with $i > j$ which \emph{do not} span a copy of $(T, r, e_2)$ in $\pi$ is at most $\bad_{\varphi_u(\pi), u}^{\mathrm{out}}(T_2, r, e_2)$ (and this holds even if $j = n$). On the other hand, by the definition of $j = j_{\pi,u}$, every rooted out-arc of $D$ of the form $(uv_i, u)$ with $i < j$ does not span a copy of $(T_1, r, e_1)$ in $\pi$, so the total number of such rooted out-arcs of $D$ is at most $\bad_{\pi,u}^{\mathrm{out}}(T_1, r, e_1)$. The only other index that can contribute to $\bad_{\pi,u}^{\mathrm{out}}(T,r,e_2)$ is $j$ itself, so we obtain
\[\bad_{\pi, u}^{\mathrm{out}}(T, r, e_2) \leq \bad_{\pi, u}^{\mathrm{out}}(T_1, r, e_1) + \bad_{\varphi_u(\pi), u}^{\mathrm{out}}(T_2, r, e_2) + 1.\]

Now let $\pi$ be uniformly random in $C_D$ once again. Since $\varphi_u$ is a bijection $C_D \to C_D$, we have that $\varphi_u(\pi)$ has the same distribution as $\pi$, which means that
\[\mathbb E[\bad_{\pi, u}^{\mathrm{out}}(T, r, e_2)] \leq \mathbb E[\bad_{\pi, u}^{\mathrm{out}}(T_1, r, e_1)] + \mathbb E[\bad_{\pi, u}^{\mathrm{out}}(T_2, r, e_2)] + 1\]
by the linearity of expectation.
Summing over all $u \in V(D)$ now gives
\[\mathbb E[\bad_{\pi}^{\mathrm{out}}(T, r, e_2)] \leq \mathbb E[\bad_{\pi}^{\mathrm{out}}(T_1, r, e_1)] + \mathbb E[\bad_{\pi}^{\mathrm{out}}(T_2, r, e_2)] + n,\]
as desired.
\end{proof}

We are now ready to finish the proof of \Cref{thm:main directed} by proving \Cref{lem:main random directed}.

\begin{proof}[Proof of \Cref{lem:main random directed}]
We prove this by induction on $e(T)$. If $e(T) = 1$ and $r$ is a source in $T$, then every rooted out-arc of~$D$ spans a copy of $(T,r,e)$ in~$\pi$, so $\bad_\pi^{\mathrm{out}}(T,r,e) \equiv 0$. Similarly, if $e(T) = 1$ and $r$ is a sink in $T$, then $\bad_\pi^{\mathrm{in}}(T,r,e) \equiv 0$.

Now assume that $e(T) \geq 2$. Let $r' \in V(T)$ such that $e$ is incident to $r$ and $r'$. Note that at $r$ and $r'$ cannot both be leaves in $T$, and exactly one of them is a source in $T$ and the other is a sink since we assumed that~$T$ was antidirected. Therefore, by replacing $(T,r,e)$ with $(T,r',e)$ if necessary, we can assume by \Cref{lem:pivot} that~$r$ is not a leaf in $T$. We will assume that $r$ is a source, where the sink case is completely analogous. Let~$e_1$ and $e_2$ be two distinct arcs of $T$ incident to $r$, with $e_2 = e$, and let $T_1$ and $T_2$ be arc-disjoint subtrees of $T$ such that $V(T_1) \cap V(T_2) = \{r\}$, $E(T_1) \cup E(T_2) = E(T)$, and $e_i \in E(T_i)$ for each $i \in \{1,2\}$. Now by \Cref{lem:gluing random directed} and the inductive hypothesis, we have
\begin{align*}
    \mathbb E[\bad_\pi^{\mathrm{out}} (T,r,e)] &\leq \mathbb E[\bad_\pi^{\mathrm{out}}(T_1, r, e_1)] + \mathbb E[\bad_\pi^{\mathrm{out}}(T_2,r, e_2)] + n \\
    &\leq (e(T_1) - 1)n + (e(T_2) - 1)n + n \\
    &= (e(T) - 1)n,
\end{align*}
and the proof is complete.
\end{proof}

\section{Comparison to other recent proofs}
We wrote our proof based on the proof exposition of \Cref{conj:main} posted by Thomas Bloom~\cite{bloom2026exposition}, and we became aware of additional proofs of \Cref{conj:main} and \Cref{thm:main directed} in~\cite{debiasio2026erdos,riordan2026shortproof,SantosSteinWilliams2026Butterflies} only after our proof was fully written.

Our proof is most similar to that of Riordan and Scott~\cite{riordan2026shortproof}, which involves taking random \emph{linear} orderings of the vertex set of the host graph. It could just as well have been phrased in terms of cyclic orderings instead, but this is a matter of taste. Aside from such cosmetic differences in presentation, the more fundamental distinction is that, in essence, the only strongly rooted trees they consider are those in which the root edge is a leaf edge. This amounts to them having a slightly weaker inductive statement than the one we have, which requires a very similar, but non-identical, treatment in the inductive step.

The proofs of Santos, Stein, and Williams~\cite{SantosSteinWilliams2026Butterflies} and DeBiasio~\cite{debiasio2026erdos} follow very closely to Thomas Bloom's exposition (adapted for digraphs), and they are based on counting so-called \emph{marked pairs}, which take the form $(\pi, j)$, where $\pi = (v_1, \ldots, v_n)$ is a permutation of $V(G)$, and $v_j$ is a neighbor of $v_1$ in the graph (or digraph). The central arguments of those proofs involve establishing injections between certain subsets of these marked pairs, which are reminiscent of the bijections $C_G \to C_G$ we use (namely the reversal map and $\varphi_u$). The particular injections that are chosen do not all correspond naturally to bijections in the permutation space (or cyclic ordering space), and this makes their exact arguments difficult to phrase nicely in probabilistic terms.
\vspace{10pt}

\noindent
\textbf{Acknowledgements.} We would like to thank Liana Yepremyan for looking over our manuscript and providing helpful feedback.

\bibliographystyle{abbrv}
\bibliography{Erdos-Sos-References}

\end{document}